\font\smc=cmcsc10 scaled \magstep1
\newtheorem{Theorem}{Theorem} 
\newcommand\noi{\noindent}
\newcommand\sep{,\ }  
\newcommand\darw{\rightleftharpoons}
\newcommand\PP{\mathbb P}
\newcommand\NN{\mathbb N}
\newcommand\TT{\mathbb T}
\newcommand\ZZ{\mathbb Z}
\renewcommand\phi{\varphi}
\def\capt#1;#2{Table #1. \ #2\medskip}    
\begin{document}
\title{Inverses of monomial Cremona transformations}
\footnotetext[1]{\rm MSC-class: 14E07 (Primary); 15A29, 11C20 (Secondary).}
\footnotetext[2]{{\smc Keywords:}
{\rm monomial Cremona transformation\sep birational morphism\sep degree of inverse.}}
\author{Peter M. Johnson}
\address{}  

\begin{abstract}
\noindent We show that monomial Cremona transformations of degree $d$ 
in $\PP^{n}$ can have inverses whose degree $d'$ is quite large
(for $d > 2$, $d' = \frac{(d-1)^{n}-1}{d-2}$ occurs), and that the full list
of possible degrees $d'$ for fixed $d$ and $n$ does not always form an interval.
An easy method for inverting the maps is presented.
\end{abstract}

\maketitle

\section{Introduction: The problem of inverse degrees}

A Cremona transformation is a birational morphism $\phi: \PP^{n} \dashrightarrow \PP^{n}$,
where we fix $n \geq 2$ to avoid trivialities and
sometimes write $\PP^n_k$ to emphasize the field $k$.
There are homogeneous polynomials $g_i$ of the same degree $d$ in variables $x_0, x_1, \dots, x_n$ such
that $\phi$ induces the partial function 
$(x_0 : \dots : x_n) \to (g_0 : \dots : g_n)\,.$  
If nonconstant factors common to all the $g_i$ are canceled, the representation of $\phi$ is unique
up to a nonzero constant factor, thus giving a well-defined degree $d$,   
even under independent coordinate changes in the domain and codomain.

The inverse of $\phi$ also has a degree, here called the inverse degree $d'$ of $\phi$, so $d'' \!=\! d$.
The notation $d'$ emphasizes that we are interested in which values of inverse degrees occur as $\phi$
ranges over Cremona transformations of degree $d$, in some fixed $\PP^n_k$.
In what follows it can be assumed that $k$ (after extending) is algebraically closed.
Classical tools, in a setting described for example in \cite{ST}, but going back to the earliest
investigations by Cremona, N\"other and Cayley, easily yield a bound $d'\leq d^{n-1}$
from the study of intersections with certain sets of hyperplanes.
In $\PP^2$ this gives $d'= d$.  In $\PP^3$ it gives $\sqrt{d} \leq d' \leq d^2$,
a fact Cremona regarded as evident --- see p.~278, l.~6  of \cite{Cr}.
Cremona \cite{Cr}, \cite{Cre} showed that all such $d'$ occur here when $d=2$ or $d=3$.
This is now established for all $d$ in Pan \cite{Pa2}, with examples that modify ones
in \cite{Pa} that were found to be flawed.
In $\PP^n$, $n > 3$, many related questions remain open.

We focus almost entirely on the much simpler monomial Cremona transformations,
where the defining polynomials $g_i$ are monomial.
These have been studied actively during the last decade --- see for example
Costa and Simis \cite{CoSi}, Simis and Villarreal \cite{SiVi}, and their references.
As described in \cite{GoPa}, with fuller details in Theorem 2.2 of \cite{SiVi},
the inverse of any monomial Cremona transformation is monomial.  Even in this special case, little is known
about which inverse degrees are possible.  Our results dash hopes for simple answers.

A recent preprint \cite{CoSi} of Costa and Simis, based on \cite{Co}, provides some definitive results
on rational maps defined by quadratic monomials.  One can see when such a map is Cremona, and if so
quickly read off the degree of its inverse, by examining a graph whose vertices correspond to the
variables, with the same number of edges (allowing loops) which correspond to the quadratic monomials. 
Note that some articles such as \cite{CoSi} use $n$ variables where we, following others, use $n+1$.
As a trivial application of the main result of \cite{CoSi}, one can see that
a monomial Cremona transformation of degree 2 in $\PP^n$, where $n \geq 2$, has inverse degree at most $n$,
with the bound attained only when the graph is either a triangle, giving the classic Cremona transformation
in $\PP^2$, or a linear tree (chain) with a loop added at one of the two extremities, giving a map
$\phi: (x_0: \dots: x_n) \to (x_0^2: x_0x_1: \dots: x_{n-1}x_n)$.
In particular, when $n=3$ and $d=2$, the maximum value of $d'$ is 3 for monomial Cremona transformations,
whereas without monomiality the maximum is 4.

An example will be provided where the monomial Cremona transformations have inverse degrees fairly close
to the upper bound of $d^{n-1}$, which in this special case ceases to be optimal for $n > 2$.
Except for very small values of $n$ and $d$, it is not practical to examine all monomial Cremona
transformations to see which inverse degrees $d'$ actually occur.
Still, the few results observed from programs reveal that the $d'$ do not always form a sequence:
gaps can appear.  This and other aspects relating to computation are discussed in the last section.

\section{Methods for obtaining inverses}

To prepare the ground for a formula for calculating inverse degrees, simplifying a method
presented in Th.~2.2 of \cite{SiVi}, we establish notation and
briefly explain easy ways to establish some of the most relevant results.

The notation used to define $\phi$ will be changed slightly to admit $m+1$ monomials
$g_0, \dots,g_m$ in $n+1$ variables, so $\phi$ becomes a rational map from $\PP^n$ to the closure
of its image in $\PP^m$.  In many sources monomials are taken to be monic, but we shall
temporarily allow constant factors, to indicate how little effect this generalization has on the maps. 
Only the main case $m=n$ will  be described, roughly following \cite{GoPa}. 
The group of monomial Cremona transformations in $\PP^n_k$, $k$ an arbitrary field,
is a split extension.  First,
the maps $(x_0 : \dots : x_n) \mapsto (c_1 x_0 : \dots : c_n x_n)$, $c_i \in k^*$,
where constant factors act trivially, form a normal subgroup $\TT$, a torus $(k^*)^n$.
Multiplying by elements of $\TT$ does not affect degrees of maps.
In the group of monomial Cremona transformations, the monic monomials, which are the real focus
of interest, form a complementary subgroup to $\TT$.
As determined in \cite{GoPa}, or from discussions about lattices below,
this subgroup can be identified with Aut$(\ZZ^n)$, or GL${}_n(\ZZ)$.
It acts on $\TT$ as the full group of rational automorphisms.

Henceforth, monomials will be monic.
Given systems of coordinates, every rational map $\phi: \PP^n \dashrightarrow \PP^m$ defined
by monomials $g_i$ can be represented by its $n+1 \times m+1$ log-matrix $A$,
whose $j^{\rm th}$ column lists the exponents of the monomial $g_j$.  
A log-matrix $A$ is stochastic or, for emphasis, $d$-stochastic:
every column has the same sum, which is $d$.
Since only exponents of the $g_j$ are used, the field $k$ is now irrelevant.
The rational map $\phi$ is unchanged if, in its log-matrix, all entries in a row are adjusted
by the same amount, remaining in $\NN$.
Thus it can usually be assumed that the minimal entry in each row of $A$ is 0, in which case
its column sums give the degree of $\phi$, and we say that $A$ is {\em reduced}.

For greater flexibility, matrix entries will now be allowed to range over $\ZZ$, 
and the $g_i$ may be Laurent monomials.
For each index $i$, let $R_i$ be the 1-stochastic matrix whose entries in the $i^{\rm th}$
row are $1$ and whose other entries are 0.
Stochastic matrices  will be called {\em equivalent} if they have the same size and their difference
is an integral linear combination of the $R_i$ or, in other words, if they induce the same rational map.
Matrices are supposed be $n+1 \times m+1$, except when monomial rational maps are composed,
which corresponds to multiplying matrices of compatible sizes, then usually passing to the reduced matrix.

To say $A$ is $d$-stochastic means ${\bf 1}A = d{\bf 1}$, where ${\bf 1}$ denotes an
all 1 row vector, here of size $n+1$.  Then multiplying by $A$ on the left induces a map of
$\ZZ$-lattices (free abelian groups) $\Lambda^m_0 \to \Lambda_0^n$, where $\Lambda^m_0$ denotes
the sublattice of $\ZZ^{m+1}$ consisting of column vectors whose entries sum to 0.
Stochastic matrices are equivalent precisely when they induce the same lattice map.

There are many criteria for deciding if a map $\phi$ is birational.
Some that are specific to the monomial situation appear in \cite{SimVi} and \cite{SiVil}. 
If the above lattice map is not an isomorphism, the associated monomial map $\phi$ has no
inverse, not even among rational maps.
A proof can be extracted from Sections 1 and 2 of \cite{SimVi}, but uses more machinery than
would be expected for such a basic fact, so the following approach seems worth recording.
The key result is:

\begin{Theorem}
If a birational map $\phi: \PP^n \dashrightarrow W$, $W$ closed in $\PP^m$, is defined by monomials,
the induced lattice map $\Lambda_0^m \to \Lambda_0^n$ must be surjective.  
\end{Theorem}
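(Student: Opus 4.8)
The plan is to convert birationality of $\phi$ into a surjectivity statement for $\phi^{*}$ on function fields, from which the lattice assertion is immediate. Let $A_0,\dots,A_m$ be the columns of the log-matrix $A$, and for $\mathbf{a}\in\ZZ^{n+1}$ write $x^{\mathbf{a}}$ for the associated Laurent monomial in $x_0,\dots,x_n$. When $\mathbf{a}\in\Lambda_0^n$ this monomial is homogeneous of degree $0$, so it is a well-defined element of $k(\PP^n)$, and $\mathbf{a}\mapsto x^{\mathbf{a}}$ is a group homomorphism $\Lambda_0^n\to k(\PP^n)^{*}$ which is \emph{injective}, since $x_0,\dots,x_n$ are algebraically independent over $k$. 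Using ${\bf 1}\mathbf{b}=0$ to rewrite $y^{\mathbf{b}}$ through the ratios $y_j/y_0$, each of which pulls back to $g_j/g_0=x^{A_j-A_0}$, one checks that $\phi^{*}(y^{\mathbf{b}})=x^{A\mathbf{b}}$ for all $\mathbf{b}\in\Lambda_0^m$; hence on these monomials $\phi^{*}$ is exactly the lattice map $\Lambda_0^m\to\Lambda_0^n$ given by left multiplication by $A$, and its image is the sublattice $L:=A(\Lambda_0^m)$, generated by the differences $A_j-A_0$.

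Next I identify $\phi^{*}(k(W))$ inside $k(\PP^n)$. Since $g_0$ is a nonzero monomial, the image of $\phi$, and hence $W$, meets the chart $y_0\neq 0$, so $k(W)$ is generated over $k$ by the restrictions of $y_1/y_0,\dots,y_m/y_0$. Applying $\phi^{*}$ shows that $\phi^{*}(k(W))$ is the subfield of $k(\PP^n)$ generated by $x^{A_1-A_0},\dots,x^{A_m-A_0}$, i.e.\ $\phi^{*}(k(W))=k(x^{\mathbf{a}}:\mathbf{a}\in L)$. As $\phi$ is dominant onto $W$, the pullback $\phi^{*}:k(W)\to k(\PP^n)$ is an injective field homomorphism, and $\phi$ is birational precisely when it is onto; so birationality amounts to
\[
 k(x^{\mathbf{a}}:\mathbf{a}\in L)\;=\;k(\PP^n)\;=\;k(x^{\mathbf{a}}:\mathbf{a}\in\Lambda_0^n),
\]
and it remains to deduce $L=\Lambda_0^n$, which is the asserted surjectivity.

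A $\ZZ$-basis of a sublattice $L\subseteq\Lambda_0^n\cong\ZZ^{n}$ consists of multiplicatively, hence algebraically, independent Laurent monomials, so $k(x^{\mathbf{a}}:\mathbf{a}\in L)$ has transcendence degree $\mathrm{rank}\,L$ over $k$; comparing with the transcendence degree $n$ of $k(\PP^n)$ forces $\mathrm{rank}\,L=n$, so $L$ has finite index in $\Lambda_0^n$. To promote this to equality, apply Smith normal form: pick a basis $f_1,\dots,f_n$ of $\Lambda_0^n$ and positive integers $d_1,\dots,d_n$ with $d_1f_1,\dots,d_nf_n$ a basis of $L$. Writing $u_i=x^{f_i}$, the $u_i$ are algebraically independent, $k(\PP^n)=k(u_1,\dots,u_n)$, and $k(x^{\mathbf{a}}:\mathbf{a}\in L)=k(u_1^{d_1},\dots,u_n^{d_n})$, an extension of degree $d_1\cdots d_n=[\Lambda_0^n:L]$; equality of these two fields forces every $d_i=1$, that is, $L=\Lambda_0^n$.

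The one step that needs genuine care is this last degree count in positive characteristic, where it reduces to the one-variable fact $[k(t):k(t^{d})]=d$: even when the characteristic of $k$ divides $d$, so $T^{d}-t^{d}$ is inseparable, one still has $t\notin k(t^{d})$, and a short bookkeeping with $t$-degrees recovers the value $d$. Everything else is formal --- in particular, the injectivity of $\mathbf{a}\mapsto x^{\mathbf{a}}$ noted at the outset rules out any accidental identification of monomials when passing to $k(W)$.
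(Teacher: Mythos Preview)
Your proof is correct and follows essentially the same route as the paper's: both identify $\phi^{*}(k(W))$ with the subfield of $k(\PP^n)$ generated by monomials indexed by the image lattice $L$, choose a basis of $\Lambda_0^n$ adapted to $L$ (Smith normal form, which the paper invokes implicitly as ``a suitable change of basis''), and conclude from the resulting field-degree count that all elementary divisors equal $1$. Your write-up is more explicit about Smith normal form and about why the degree formula $[k(u_1,\dots,u_n):k(u_1^{d_1},\dots,u_n^{d_n})]=d_1\cdots d_n$ survives in positive characteristic, but the underlying argument is the same.
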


\begin{proof}
By assumption, $\phi$ has a rational inverse $\psi$, which must define an injective function
on some nonempty open subset $V$ of $W$ via $n+1$ homogeneous polynomials in variables $y_0, \dots y_m$
coordinatizing $\PP^m$, with open image $U \subset \PP^n$ on which $\phi$ is defined.
All sets here are irreducible.  If desired, one can assume that the coordinate functions $x_i$ or $y_j$
vanish nowhere on $U$ or $V$, to work with rings $R_U$, $R_V$ of functions on $U$, $V$,
generated by Laurent monomials of degree 0 in the coordinates.  Recall that
$\Lambda_0^n$ contains the image of the map on $\Lambda_0^m$ induced by the log-matrix of $\phi$.  
A suitable change of basis for $\Lambda_0^n$, and a shift to multiplicative language, produce Laurent
monomials $z_i$ that are free generators of $R_U$, such that certain powers $z_i^{d_i}$ generate the
same multiplicative group as that generated by the monomials $\phi^*(y_j)$.
Although $\psi$ induces an isomorphism between $k(U)$ and $k(V)$ (quotient fields of $R_U$ and $R_V$),
the images of the $z_i^{d_i}$ already generate $k(V)$. 
This forces $|d_i| = 1$ for all $i$, so the above lattice map is surjective. 
\end{proof}

A rational map $\phi: \PP^n \dashrightarrow \PP^n$ defined by monomials is birational precisely
when it induces an automorphism of the lattice $\Lambda_0^n$, and the inverse must be monomial.
Such maps have  been dealt with before, most notably in \cite{SiVi}.
Our main aim is to provide practical methods for obtaining inverse maps and their degrees,
where the matrices can have entries in $\ZZ$.

Let $\phi$ be birational, with $d$-stochastic log-matrix $A$ acting on $\ZZ^{n+1}$.
The restriction to $\Lambda_0^n$ has determinant $\pm 1$ and, for every column vector ${\bf v}$,
$A{\bf v} -d{\bf v}$ lies in $\Lambda_0^n$. 
Thus det$(A) = \pm d$ (cf.\ Lemma~1.2 of \cite{SimVi}).  Now assume $d \neq 0$.
Since $A^{-1}$ acts on $\Lambda_0^n$, the difference of any two columns of $A^{-1}$ is integral 
(cf.\ Lemma~2.1 of \cite{SiVi}).  This inverse is of the form $d^{-1}A^*$, where $A^*$ has entries
in $\ZZ$.  In each row $i$ of $A^{-1}$, let $r_i$ be the least element, and define $k_i = -dr_i$. 
All $k_i$ lie in $\NN$, since $A^{-1}A = I$ and $n+1 \geq 2$.  Using again the matrices $R_i$ with
$i^{\rm th}$ row ${\bf 1}$ to adjust rows, define $B = (I+\sum_i k_iR_i)A^{-1}$.  Then $B$ is the
reduced matrix that represents the inverse of $\phi$, so its column sums give the inverse degree.
This yields a formula that may have theoretical use:

\begin{Theorem}
Let $A$ be a $d$-stochastic matrix obtained from Laurent monomials that define a Cremona
transformation.  Then the inverse map is also monomial and, assuming $d \neq 0$, its degree is
$d^{-1}-\sum r_i$, where $r_i$ denotes the least value occurring in the $i^{th}$ row of $A^{-1}$.
\end{Theorem}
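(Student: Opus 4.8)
The plan is to read the inverse straight off the rational matrix $A^{-1}$ and then renormalize it into a genuine reduced log-matrix. Since $d\neq 0$, the relation det$(A)=\pm d$ recorded above shows $A$ is invertible over the rationals. Because $\phi$ is a Cremona transformation of $\PP^n$, $A$ induces an automorphism of $\Lambda_0^n$ (Theorem~1 gives surjectivity, hence bijectivity, since a surjective endomorphism of a finitely generated free abelian group is an isomorphism), so $A^{-1}$ too carries $\Lambda_0^n$ into itself; hence the difference of any two columns of $A^{-1}$ is an integral vector, and $A^{-1}=d^{-1}A^{*}$ with $A^{*}$ integral. In particular the entries in any one row of $A^{-1}$ are mutually congruent modulo $\ZZ$, so, writing $r_i$ for the least entry in row $i$, the numbers $k_i=-d\,r_i$ are integers, indeed nonnegative as noted before the statement.

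I would next check that $B=(I+\sum_i k_iR_i)A^{-1}$ is the reduced log-matrix of $\psi:=\phi^{-1}$. The matrix $I+\sum_i k_iR_i$ has integer entries and differs from $I$ by an integral combination of the $R_i$, so it represents the identity map, whence $B$ represents the same rational map as $A^{-1}$. The bookkeeping is streamlined by the identity $R_iA^{-1}=d^{-1}R_i$: each row of $R_i$ is ${\bf 1}$ or $0$, and every column of $A^{-1}$ sums to $d^{-1}$ because ${\bf 1}A=d{\bf 1}$ forces ${\bf 1}A^{-1}=d^{-1}{\bf 1}$. It follows that $B=A^{-1}-\sum_i r_iR_i$, so row $i$ of $B$ is row $i$ of $A^{-1}$ with its minimum subtracted off; hence every row of $B$ has least entry $0$, and by the congruence remark all entries of $B$ lie in $\NN$. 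Thus $B$ is an $\NN$-valued reduced log-matrix, and once we know it represents $\psi$ its common column sum is by definition the degree of $\psi$.

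It then only remains to compute that sum: from ${\bf 1}A^{-1}=d^{-1}{\bf 1}$ and ${\bf 1}R_i={\bf 1}$ we get ${\bf 1}B={\bf 1}A^{-1}-(\sum_i r_i){\bf 1}=(d^{-1}-\sum_i r_i){\bf 1}$, the asserted formula. The one step carrying real content --- and hence the main obstacle --- is the assertion that $A^{-1}$ genuinely represents the inverse rational map, and with it that $\psi$ is monomial. Here one invokes the identification, recalled before the statement (going back to \cite{GoPa} and ultimately underpinned by Theorem~1), of the monic monomial Cremona transformations of $\PP^n$ with Aut$(\Lambda_0^n)$: under it $\phi$ corresponds to the automorphism $A$, the monomial map corresponding to $A^{-1}$ is a two-sided inverse of $\phi$ and so equals $\psi$, and the reduced log-matrix of $\psi$ is exactly the $\NN$-valued, row-minimum-zero representative of $A^{-1}$, namely $B$. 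Granting this, the theorem reduces to the three short matrix computations above.
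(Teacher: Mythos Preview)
Your argument is correct and follows essentially the same route as the paper: the discussion preceding the theorem already establishes that $A^{-1}$ preserves $\Lambda_0^n$, that $A^{-1}=d^{-1}A^{*}$ with $A^{*}$ integral, and that $B=(I+\sum_i k_iR_i)A^{-1}$ is the reduced log-matrix of $\phi^{-1}$. Your contribution is to spell out the final step the paper leaves implicit, via the clean identities $R_iA^{-1}=d^{-1}R_i$ and ${\bf 1}B=(d^{-1}-\sum_i r_i){\bf 1}$, which is a welcome clarification but not a different method.
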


To clarify this result, and to eliminate the apparent need to work with fractions, 
one should work within the framework of Laurent monomials in affine coordinates $X_i = x_i/x_0$. 
We present this indirectly via an easily applicable matrix-based approach, in which $d$ and $d'$
are regarded as functions on the group $G$ of automorphisms of $\Lambda_0^n$,
with $d'(g) = d(g^{-1}) \in \ZZ^+$.

Using the ordered basis $e_i-e_0$ $(0 < i \leq n+1)$, $G$ is identified with the matrix group
GL${}_n(\ZZ)$.  Starting with the matrix of any $g \in G$, one prepends a row and column in the
zero-th position, so that all entries of this column, and all column sums, are zero. 
Adjusting rows as before yields a reduced matrix $A = A_g$ which, as should be clear from earlier
discussions, is the log-matrix of a monomial Cremona transformation $\phi$ whose column sums give
the degree $d(\phi)$.  Thus $d(g)$ is the sum of $n+1$ nonnegative terms, one for each row of $A$,
using the negative of the least term in the row, or 0 if no negative terms are present.
Similar formulas appear in Remark~3.2 of \cite{GoPa} in a loosely related context.

\medskip

\noi{\bf Example 1.} We pass back and forth between some matrices in GL${}_2(\ZZ)$ and the related
log-matrices for monomial Cremona transformations of $\PP^2$ (so $d'=d$).

\bigskip

$g = \left[\begin{array}{rr} 1& 24 \\ 1&25 \end{array}\right]
\darw\ \ \left[\begin{array}{rrr} 0& -2 &-49 \\ 0&1&24\\0&1&25 \end{array}\right]
\darw\ \ A_g \,=\, \left[\begin{array}{rrr} 49& 47&0 \\ 0&1&24\\0&1&25 \end{array}\right]$,
$\ \ d(g) = 49$.

\bigskip

{\parindent0pt

$g^{-1} = \left[\begin{array}{rr}  25 & 24 \\ -1&1 \end{array}\right]
\darw \left[\begin{array}{rrr} 0& -24 & 23 \\ 0&25&-24\\0&-1&1 \end{array}\right]
\darw A_{g^{-1}} \!=\! \left[\begin{array}{rrr} 24& 0 &47 \\ 24&49&0\\1&0&2 \end{array}\right]$,
$d(g^{-1}) = 49$.

\bigskip

$-g = \left[\begin{array}{rr}  -1& -24 \\ -1&-25 \end{array}\right]
\darw \left[\begin{array}{rrr} 0& 2 &49 \\ 0&-1&-24\\0&-1&25 \end{array}\right] \darw
A_{-g} \!=\! \left[\begin{array}{rrr} 0&2&49 \\ 24&23&0\\ 25&24&0 \end{array}\right]$,
$\,d(-g) = 49$.

\bigskip

}

$g^{\rm t} = \left[\begin{array}{rr}  1& 1 \\ 24&25 \end{array}\right]
\darw\ \left[\begin{array}{rrr} 0& -25 &-26 \\ 0&1&1\\0&24&25 \end{array}\right]
\darw\ A_{g^{\rm t}} = \left[\begin{array}{rrr} 26& 1&0 \\ 0&1&1\\0&24&25 \end{array}\right]$,
$\ \ d(g^{\rm t}) = 26$.

\bigskip

Permutations of coordinate vectors induce a subgroup $S_{n+1}$ of $G = {\rm GL}_n(\ZZ)$,
and $d$ is constant on each double coset $S_{n+1}gS_{n+1}$, since this involves
permuting rows and columns of stochastic matrices.
By considering $A_{g_1}A_{g_2}$, one sees that $d(g_1 g_2) \leq d(g_1)d(g_2)$. 
Also, for $g \notin S_{n+1}$, $d(g) \geq 2$, and $d(-I)=n$. 
Such ideas, developed further, may well yield useful information.

\section{Examples with large inverse degrees}

There are in the literature many examples of monomial Cremona transformations with diverse features
of interest --- see for example Section 5 of  \cite{SiVil}. 
An attempt to construct maps in $\PP^n$ of degree $d$ with relatively large inverse degrees $d'$
led to the following examples, which may be those with the largest possible $d'$.
Extensive tests produced no other example (up to permutations) 
that attained or exceeded the value given in this section.
We will proceed as in Theorem~2.  This should be compared with the later approach.

\medskip

\noi{\bf Example 2.}  For $d,n \geq 2$, 
define $\phi: (x_0 : \dots : x_n)
\to (x_0^d : x_0^{d-1} x_1 :  \dots : x_{n-1}^{d-1} x_n)$, which has an upper-triangular log-matrix
$A$ whose nonzero entries are $a_{11} = d$, and, for $i > 1$, $a_{i-1,i} = d-1$,  $a_{ii} = 1$.
Let $B$ be the upper-triangular matrix with entries $b_{ij} = (1-d)^{j-i}$ for $i \leq j$.
It is easy to verify that dividing the first line of $B$ by $d$ produces $A^{-1}$.
For convenience, write $c = d-1$.  
To illustrate, the matrices $A$ and $B$ (almost inverses) are shown below when $n$ is 2 or 3.
The degree of $\phi$ is $d$ and the above criterion will, after simplification, give the
inverse degree $d' = 1 + c + \dots + c^{n-1}$, which is  $\,\frac{(d-1)^{n}-1}{d-2}\,$ if $\,d > 2$.

For $n$ of the form $2m$, the raw sum for $d'$ is  $\frac{1}{c+1} + \frac{c^{2m-1}}{c+1} + c^{2m-1} +
2\sum_{i=0}^{m-2} c^{2i+1} + 0$, whose fractional parts yield an alternating sum that meshes
with the last sum.
The case $n = 2m+1$, $d' =  \frac{1}{c+1} + \frac{c^{2m+1}}{c+1} + 2\sum_{i=0}^{m-1} c^{2i+1} + 0$,
is almost identical.

\bigskip

For $n=2$,
$A = \left[\begin{array}{crr} c+1 & c & 0 \\ 0 & 1 & c \\ 0 & 0 & 1 \end{array}\right]$,
$B = \left[\begin{array}{rrr}  1 & -c & c^2 \\ 0 & 1 & -c \\ 0 & 0 & 1 \end{array}\right]$.

\bigskip

For $n=3$, 
$A = \left[\begin{array}{crrr}  c+1 & c & 0 & 0\\ 0&1&c&0\\ 0&0&1&c\\0&0&0&1 \end{array}\right]$,  
$B = \left[\begin{array}{rrrr}  1 & -c & c^2 & -c^3\\ 0 & 1 & -c & c^2\\ 0&0&1& -c \\ 0&0&0&1
\end{array}\right]$.

\section{Computational aspects}

All monomial Cremona transformations of degree 5 in $\PP^3$ can be generated from the 367,290
combinations (unordered selections) of 4 out of 56 monomials in $x_0, \dots,x_3$, discarding
those that fail to define a birational map of degree 5.  Only 11,496 maps survive.
By using combinations, the effect of permuting the columns (four coordinates in the image) has been
factored out, so that only one out of each class of 24 maps is counted.
The $S_4$ that permutes rows acts on these classes, in orbits (often regular) consisting of maps with
the same inverse degree.  The frequency of occurrence of each $d'$, but not the orbits, of these
11,496 maps, is shown below.
The $d'$ for the 48,042 monomial Cremona transformations of degree 3 in $\PP^4$ are also shown.
In that case, four kinds of maps are square-free, with descriptions given in Prop.~5.5 of \cite{SiVil}.

\bigskip
\goodbreak

\begin{center}
\capt 1;{All monomial Cremona transformations with $d=5$ in $\PP^3$.}

\begin{tabular}{|c||c|c|c|c|c|c|c|c|c|c|}
\hline
Value of $d'$: &3&4&5&6&7&8&9&10&11&12\\
Freq.\ of $d'$: &
 120 & 672 & 1932 & 1044 & 1584 & 1440 & 1248 & 696 & 816 & 552 \\
\hline
&13&14&15&16&17&18&19&20&21&\\
& 480 & 168 & 240 & 240 & 168 & 0 & 24 & 48 & 24 & \\
\hline
\end{tabular}

\bigskip

\capt 2;{All monomial Cremona transformations with $d=3$ in $\PP^4$ .}

\begin{tabular}{|c||c|c|c|c|c|c|c|}
\hline
 Value of $d'$: &2&3&4&5&6&7&8\\
Freq.\ of $d'$: &
 432 & 8670 & 14640 & 10920 & 5820 & 3720 & 1200 \\
\hline
 &9&10&11&12&13&14&15\\
 & 1080 & 840 & 360 & 0 & 240 & 0 & 120 \\
\hline
\end{tabular}

\bigskip
\end{center}

One sees from orbit sizes that, in both cases, the only examples giving the maximum value of $d'$
are those described in the previous section, up to permutations of rows and columns.
More importantly, even for these small values of $d$ and $n$, the list 
for $d'$ contains gaps instead of forming an interval. 
Similar exhaustive calculations show that for monomial Cremona transformations in $\PP^3$,
the next case $d=6$ has values of $d'$ that do form an interval $[3,31]$, and here each $d'$
from 28 to 31 arises from an essentially unique map.
When $d=7$, $d'$ can assume any value in $[3, 43]$ except 39 or 40,
while for $d=8$, $d' \in [4,57]$, excluding only 54.
A double gap first appears for $d=10$, where $d'$ cannot be $84$, $87$ or $88$.

Moving to $\PP^4$, it is unlikely that theory will help explain the results.
For example, when $d=4$, the $d'$ lie in $[2, 40]$, with gaps at 32, 34--36, and 38--39.
When $d=5$, the $d'$ lie in $[3,85]$, with gaps at 63, 70, 72, 74--75, 77--80, and 82--84.

The calculations were performed in SINGULAR \cite{DGPS}, with about 100 lines of code, available on request.
On an ordinary computer, this took a few minutes, a few hours, or (for $d=5$ in $\PP^4$) about two days.
Results from a similar program that randomly generated large numbers of Cremona transformations
with fixed $d$ and $n$ had already hinted strongly where the smaller gaps lie.

\medskip

Since this seems to be the first record of systematic calculations made in this area, it seems appropriate
to mention what data can and cannot be expected to be obtainable. 
Tables for values of $d$ up to 12 in $\PP^3$ have been produced, with 13 expected to be the last,
as the naive algorithm used becomes too slow.   An implementation to count the number of monomial
Cremona transformations in $\PP^4$ with given multidegrees \cite{Pa} $d_1 = d$, $d_2$, $d_3 = d'$,
and giving data for a few values of $d$, should soon follow.

The diversity and richness of examples obtained already provide a counterpart and challenge to theory.
To go further, it seems worthwhile to use random sampling to assemble extensive collections of certain
kinds of Cremona transformations, not just monomial ones, then search for new phenomena. 
This needs to be done with care.  We point out where some simple approaches run into difficulties.

The idea used above for random generation of monomial Cremona transformations quickly becomes useless
as $d$ and $n$ grow, as $d$-stochastic matrices giving Cremona transformations are rare,
and those giving large values of $d'$ are even rarer.
Also, the model used to generate columns ($d$ approximately independent and uniform selections
of a position) is strongly biased against outcomes where the nonzero values are concentrated in
few entries, the ones most likely to give a $d'$ that is unusually large or small.
By repeated sampling until many\footnote[1]{20,000 down to 100, as parameters grew.}
monomial Cremona transformations were obtained, the maximum $d'$ observed started to fall
well below the bound given by the examples of the previous section,
blocking attempts to clarify where further gaps might lie.
To show just one small example, by generating only 10,000 Cremona transformations with $d=6$
in $\PP^3$ we did come fairly close to the maximum value $d'= 31$ but, as typically happened,
failed to observe the minimum, here $d'= 3$, although values just above this appeared frequently.

\bigskip
\goodbreak

\begin{center}
\capt 3;{10,000 samples (from 3,226,875 attempts) with $d=6$ in $\PP^3\!.$}

\begin{tabular}{|c||c|c|c|c|c|c|c|c|}
\hline
Value of $d'$: &4&5&6&7&8&9&10&11\\
Freq.\ of $d'$: & 260 & 270 & 797 & 838 & 1169 & 1737 & 1031 & 1342 \\
\hline
  &12&13&14&15&16&17&18&19\\
  & 792 & 704 & 224 & 311 & 158 & 174 & 36 & 56 \\
\hline
 & 20&21&22&23&24&25&26&27\\
 & 64 & 17 & 3 & 6 & 1 & 9 & 0 & 1\\
\hline
\end{tabular}
\end{center}

\bigskip

A better approach is to work in a fixed $\PP^n$, using matrices in GL${}_n(\ZZ)$ to generate
monomial Cremona transformations with variable parameters $(d,d')$ (or multidegrees) whose range of
values should become clearer over time.  A stream of matrices can be obtained from the identity matrix
by making random choices to replace some line with that line plus a multiple of another line.
Any matrix in GL${}_n(\ZZ)$ is reachable by this process: just consider how to row-reduce nonsingular
matrices when working over the integers.  For a slightly different explanation, see \cite{Tr}.
In practice, best results were obtained using multiples restricted to have small absolute value
(even 5 seems too large), which helped postpone arriving at a case with $d$ or $d'$ beyond the
range considered, at which point the generation process was started afresh.
This idea has the great advantage that it produces a usable example at each step.
However, building a table of pairs $(d,d')$ that occur in a certain range is a slow process,
as the cases of most interest tend to appear rarely --- in some cases, only once in more than
330 million trials.  Several days of computation were enough to fill most of the possible positions
repeatedly, but in no case was there compelling evidence to suggest whether some new apparent gap
was real or illusory.

\bigskip

{\smc Acknowledgment.} A conversation with Ivan Pan helped clarify details about bounds for $d'$.
He also sent an early version of \cite{Pa2}.

\bigskip

\small
\frenchspacing
\bibliographystyle{plain}

\begin{thebibliography}{99} 


\bibitem{Co}{B. Costa, {\em Transforma\c c\~oes de Cremona definidas por mon\^omios},
Ph.D. thesis, UFPE, Recife, Brazil, 2011.}

\bibitem{CoSi}{B. Costa, A. Simis,
{\em Cremona maps defined by monomials}, 21 pp., arXiv:1101.2413v1 [math.AC].}

\bibitem{Cr}{L. Cremona, {\em Sulla transformazione razionale di 2.${}^\circ$ grado nello spazio,
la cui inversa \`e di 4.${}^\circ$ grado},
Memorie dell'Accademia delle Scienze dell'Istituto di Bologna, Ser. III, {\bf 1} (1871), 365--386.
\ Reprinted in \cite{Cr3}.}

\bibitem{Cre}{L. Cremona, {\em Sulle transformazioni razionali nello spazio},
Annali di Matematica pura ed applicata, ser. II, {\bf V} (1871), 131--162.
\ Reprinted in \cite{Cr3}.}

\bibitem{Cr3}{{\em Opere matematiche di Luigi Cremona}, Vol. 3, Ulrico Hoepli, Milan, 1917.}

\bibitem{DGPS} {W. Decker, G.-M. Greuel, G. Pfister, H. Sch{\"o}nemann,
{\sc Singular} {3-1-2} --- {A} computer algebra system for polynomial computations.
{http://www.singular.uni-kl.de} (2010).}

\bibitem{Fu}{W. Fulton, {\em Intersection theory}, Springer, New York, 1984.}

\bibitem{GoPa}{G. Gonzalez-Sprinberg and I. Pan, {\em On the monomial
birational maps of the projective space}, An. Acad. Brasil. Ci\^{e}nc. 
{\bf 75} (2003), no.~2, 129--134.}

\bibitem{GPa}{G. Gonzalez-Sprinberg and I. Pan, {\em On characteristic classes of determinantal
Cremona transformations}, Pr\'epublication de l'Institut Fourier n${}^{\rm o}$ \bf 681} (2005), 9 pp.


\bibitem{Pa}{I. Pan, {\em Sur le multidegr\'e des transformations de Cremona},
C. R. Acad. Sci. Paris,  S\'erie I, {\bf 330} (2000), 297--300.}

\bibitem{Pa2}{I. Pan, {\em On Cremona transformations of $\PP^3$ with all possible multidegrees},
preprint (2011), 4~pp.}

\bibitem{ST}{J. G. Semple, J. A. Tyrrell, {\em Specialization of Cremona transformations},
Mathematika {\bf 15} (1968), 171--177.}

\bibitem{SimVi}{A. Simis and R. H. Villarreal,
{\em Constraints for the normality of monomial subrings and birationality},
Proc. Amer. Math. Soc. {\bf 131} (2003), 2043--2048.}

\bibitem{SiVi}{A. Simis and R. H. Villarreal,
{\em Combinatorics of Cremona monomial maps}, 10~pp., arXiv:0904.4065v2 [math.AG],
to apppear in Math. Comp.}

\bibitem{SiVil}{A. Simis and R. H. Villarreal,
{\em Linear syzygies and birational combinatorics}, Results Math. {\bf 48} (2005), no. 3--4, 326--343.}

\bibitem{Tr}{S. Trott, {\em A pair of generators for the unimodular group}, Canad. Math. Bull. {\bf 5}
(1962), 245--252.}


\end{thebibliography}

\bigskip
{\smc\noindent
Departamento de Matem\'atica\\
Universidade Federal de Pernambuco\\
50740-540 Recife--PE\\ Brazil\\
{\tt peterj@dmat.ufpe.br}}

\end{document}